\DeclareMathOperator{\Pow}{Pow}
\newcommand{\oneto}[1]{\{1, \dotsc, #1 \}}
\newcommand{\genlines}[2]{L_{#1}^{#2}}
\newcommand{\db}[1]{e_{\{#1\}}^*}
\newcommand{\eb}[1]{e_{\{#1\}}}
\newcommand{\realcone}[1]{\mathcal{R}_{#1}}
\newcommand{\polymatcone}[1]{\mathcal{P}_{#1}}
\newcommand{\subv}{\mathcal{V}}
\newcommand{\ineq}[1]{I_{#1}}
\newcommand{\pb}{\#}
\newcommand{\pf}{\#}
\newcommand{\pair}[1]{\llbracket #1 \rrbracket}
\begin{document}

\title{New inequalities for subspace arrangements}
\author{Ryan Kinser}
\address{Department of Mathematics, University of Michigan, Ann Arbor, Michigan 48109}
\email{kinser@gmail.com}
\thanks{This material was based upon work supported under NSF Grant DMS 0349019}

\begin{abstract}
For each positive integer $n \geq 4$, we give an inequality satisfied by rank functions of arrangements of $n$ subspaces.  
When $n=4$ we recover Ingleton's inequality; for higher $n$ the inequalities are all new.
These inequalities can be thought of as a hierarchy of necessary conditions for a (poly)matroid to be realizable.
Some related open questions about the ``cone of realizable polymatroids'' are also presented.

\end{abstract}
\maketitle



\section{Introduction}

\subsection{Preliminaries}

For a set $X$, denote by $\Pow(X)$ the set of all subsets of $X$.  A \textbf{polymatroid} is a pair $(X, \rk)$, where $X$ is a finite set and
\[
\rk \colon {\Pow} (X) \to \N = \{0, 1, 2, \dotsc \}
\]
is a function satisfying:
\begin{enumerate}[({PM}1)]
\item $\rk(\emptyset) = 0$,
\item $\rk(A) \leq \rk(B)$  for $A \subseteq B$,
\item $\rk(A \cup B) + \rk(A \cap B) \leq \rk(A) + \rk(B)$ for all $A,B$.
\end{enumerate}
We call $X$ the \textbf{ground set} and $\rk$ the \textbf{rank function} of the polymatroid; sometimes we say that ``$\rk$ is a polymatroid on $X$.''
A \textbf{matroid} can be defined as a polymatroid for which the rank of each one element subset is  at most 1 \cite[\S~2.3]{MR849391}.
Two polymatroids $(X, \rk_X)$ and $(Y, \rk_Y)$ are said to be \textbf{isomorphic} if there exists a bijection $\varphi \colon X \to Y$ such that $\rk_Y \! \circ \varphi = \rk_X$.
In this paper we are only interested in $|X|$, the cardinality of $X$, so we can take $X=[n] := \oneto{n}$, and we write $\Pow(n):=\Pow([n])$.

A \textbf{subspace arrangement} is a collection of subspaces $\subv = \{V_1, \dotsc, V_n\}$ of some finite dimensional vector space.  Such a $\subv$ gives rise to a polymatroid $([n], \rk_\subv)$ by defining
\[
\rk_\subv(A) = \dim \left( \sum_{i \in A} V_i \right)
\]
(where the empty sum is 0).  
A polymatroid is said to be \textbf{realizable} (or representable) over a field $K$ if it is isomorphic to $([n], \rk_\subv)$ for some $K$-subspace arrangement $\subv$.  The unqualified statement ``$([n],\rk)$ is realizable'' is taken to mean that there exists some field over which $([n],\rk)$ is realizable, and this is the property of polymatroids that we will be interested in throughout this paper.   A general problem is to give combinatorial characterizations of realizability in various contexts (e.g., over a specific list of fields or over fields of given characteristics).
For example, there are explicit forbidden minor characterizations for realizability over $\mathbb{F}_2$, realizability over $\mathbb{F}_3$, and realizability over all fields \cite{MR849396} (here, $\mathbb{F}_q$ denotes the field with $q$ elements).

\subsection{The realizable cone}
Making the identification $\{F \colon \Pow(n) \to \R \} = \R^{2^{n}}$, we can consider the set of polymatroids on $[n]$ to be the integral points of a closed, convex cone $\polymatcone{n}$. 
The notes \cite{Mustata:2005kx} and the book \cite{MR1940576} are good references for the elements of convex geometry.  
This cone is defined by the so-called \textbf{basic inequalities} (PM1), (PM2), and (PM3) above; 
the set of realizable polymatroids on $[n]$
then generates a convex cone $\realcone{n} \subseteq \polymatcone{n}$.
We refer to $\polymatcone{n}$ (resp. $\realcone{n}$) as the ``cone of polymatroids (resp. realizable polymatroids) on $n$ elements''; this could be somewhat misleading terminology, however (cf. \S\ref{sect:future}).
This viewpoint has been used by information theorists to study which polymatroids are obtained as the Shannon entropy of a discrete random vector \cite{Zhang:1998zr,Dougherty:2006yq,Matus:2007rt,Guille:2008sf}.
There doesn't seem to be much known about $\realcone{n}$ for $n > 4$;
see Section \ref{sect:future} for a discussion of open questions about $\realcone{n}$.
 
For $n \leq 3$, it is known that $\realcone{n} = \polymatcone{n}$, but this does not hold for $n\geq4$.  In the 1960s, A.W. Ingleton found that the following inequality is satisfied by any arrangement of four subspaces $\{V_1, V_2, V_3, V_4 \}$:
\begin{equation}\label{eq:ingleton}
\begin{split}
\dim(V_1 + V_2) + \dim(V_1 + V_3 + V_4) + \dim V_3 + \dim V_4 + \dim (V_2 + V_3 + V_4) \leq \\
\dim (V_1 + V_3) + \dim (V_1 + V_4) + \dim (V_2 + V_3) + \dim (V_2 + V_4) + \dim (V_3 + V_4) .
\end{split}
\end{equation}
This inequality does not follow from the defining inequalities for polymatroids (which can be seen by considering Vamos's matriod, cf. \cite[p.~159]{MR0278974}), so $\realcone{n} \neq \polymatcone{n}$ for $n \geq 4$.  A complete description of $\realcone{4}$ was given in \cite[Thm.~5]{MR1785025} by explicit computational methods. 
They found that the basic inequalities and all Ingleton inequalities, that is, those obtained by permutations of the indices in (\ref{eq:ingleton}), are enough to
define $\realcone{4}$.
Ingleton asked (\emph{loc. cit.}) whether there might still be further independent inequalities satisfied by subspace arrangements; the following theorem, which is the main result of this paper, answers his question affirmatively.

\begin{theorem}\label{thm:main}
Let $V_1, \dotsc, V_n \subseteq V$ be a subspace arrangement with $n \geq 4$, and write
\[
\gen{i_1, \dotsc, i_r} := \dim \sum_{j=1}^r V_{i_j}.
\]
Then the inequality
\begin{equation}\label{eq:theorem}
\gen{1,2}  + \gen{1,3,n} +\gen{3} + \sum_{i=4}^n \bigl( \gen{i} + \gen{2,i-1,i} \bigr) \leq \gen{1,3} + \gen{1,n} + \gen{2,3} +\sum_{i=4}^{n} \bigl( \gen{2,i} + \gen{i-1, i} \bigr)
\end{equation}
holds, and is irreducible in the sense that it cannot be written as a sum of two nontrivial inequalities which hold for all subspace arrangements.  Furthermore, for each $n$, the inequality is independent of all inequalities which hold for fewer than $n$ subspaces.
\end{theorem}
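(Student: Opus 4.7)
The plan is to address the three parts of the theorem separately, with validity of~\eqref{eq:theorem} as the principal challenge.

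For part~(1), I first observe that part~(3) implies~\eqref{eq:theorem} cannot be written as a conic combination of inequalities involving fewer than $n$ subspaces; in particular, any proof by induction on $n$ using only Ingleton applied to $4$-subsets of $\{V_1, \dotsc, V_n\}$, submodularity, and~\eqref{eq:theorem} for $n-1$ is ruled out. I would instead aim for a direct linear-algebra argument producing a canonical subspace $W$ of the ambient vector space, built from $V_1, \dotsc, V_n$ by iterated sums, intersections, and quotients, whose dimension equals the deficit between the right- and left-hand sides of~\eqref{eq:theorem}; the inequality then follows immediately from $\dim W \geq 0$. A direct computation confirms that for arrangements whose $V_i$ span a direct sum (the ``generic'' case, for any choice of the dimensions $\dim V_i$) both sides of~\eqref{eq:theorem} evaluate to $2\dim V_1 + (n-2)\dim V_2 + 3\sum_{i=3}^n \dim V_i$, so the deficit vanishes generically, as required of $\dim W$. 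The telescoping structure of the chain terms $\gen{2, i-1, i}$ and $\gen{i-1, i}$ for $i = 4, \dotsc, n$ suggests building $W$ recursively, peeling off the contribution of $V_n$, then $V_{n-1}$, and so on, terminating in a base step involving only $V_1, V_2, V_3$ that mirrors the classical linear-algebra proof of Ingleton's inequality.

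For part~(2), irreducibility of~\eqref{eq:theorem} is equivalent to it defining a facet of $\realcone{n}$. My approach is to exhibit a family of realizable rank vectors saturating~\eqref{eq:theorem} whose linear span has codimension exactly one inside the span of $\realcone{n}$ in $\R^{2^n}$. The generic direct-sum arrangements from part~(1), together with specializations obtained by identifying subspaces or imposing inclusion relations, provide a natural supply of such saturating realizations; the remaining calculation reduces to verifying that varying the dimensions $\dim V_i$ and the specialization type spans the required codimension.

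For part~(3), I would construct a single polymatroid $\rk$ on $[n]$ that violates~\eqref{eq:theorem} but whose restriction $\rk|_{\Pow(S)}$ to every proper subset $S \subsetneq [n]$ is realizable; since a realizable polymatroid satisfies every valid subspace inequality on its ground set, such an $\rk$ satisfies every inequality valid on fewer than $n$ subspaces while violating~\eqref{eq:theorem}, certifying independence. A natural candidate is a Vamos-like construction in the spirit of the Vamos matroid witnessing $\realcone{4} \subsetneq \polymatcone{4}$: specify realizable rank functions on each $(n-1)$-subset and then extend to $\Pow(n)$ so that the axioms (PM1)--(PM3) still hold but the rank of a carefully chosen $n$-dependent subset (such as $\{1, 3, n\}$) is forced upward enough to violate~\eqref{eq:theorem}. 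The principal obstacle throughout is part~(1): without an inductive reduction available, the construction of $W$ must be genuinely global, and verifying that $\dim W$ equals the deficit for arbitrary (possibly degenerate) arrangements is where the main technical work lies.
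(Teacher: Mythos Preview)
Your plans for parts~(1) and~(2) point in the right direction, but part~(3) contains a genuine gap.

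\textbf{Part~(3).} Requiring that $\rk|_{\Pow(S)}$ be realizable for every proper subset $S \subsetneq [n]$ is \emph{not} sufficient to conclude that $\rk$ satisfies every inequality valid on fewer than $n$ subspaces. The relevant class of ``smaller'' inequalities consists of all $\varphi_{\#}f$ with $f \in \realcone{k}^{\vee}$ and $\varphi \colon \Pow(k) \to \Pow(n)$ a union-preserving map, $k<n$. Such a $\varphi$ may send singletons to arbitrary subsets of $[n]$: for example, Ingleton applied to the four subspaces $V_1+V_5,\,V_2,\,V_3,\,V_4$ is an inequality on $5$ subspaces coming from $4$, but it is \emph{not} detected by any restriction $\rk|_{\Pow(S)}$. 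What must be checked is that $\varphi^{\#}T$ is realizable for every such $\varphi$, and this is a substantially larger family than restrictions. The paper handles this by first reducing to $k=n-1$, then observing that the support of $I_n$ forces $\varphi(i)=\{i+1\}$ for $i\geq 2$ (after relabeling), leaving only finitely many possibilities for $\varphi(1)$, each of which is realized explicitly.

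\textbf{Part~(1).} Your instinct that induction on $n$ is unavailable and that a telescoping structure is present is correct. However, the paper does not produce a single subspace whose dimension \emph{equals} the deficit; rather, it bounds the deficit from below by the slack in a chain of inequalities. The key move is to set $W = V_3 \cap \cdots \cap V_n$, apply submodularity once to the pair $(W+V_1,\, W+V_2)$, and then bound each side separately: the quantity $[W+V_1+V_2:W+V_1]$ from below via $W+V_1 \subseteq (V_1+V_3)\cap(V_1+V_n)$, and $[W+V_2:W]=[V_2:V_2\cap W]$ from above via the telescoping chain $V_2 \supseteq V_2 \cap V_3 \supseteq \cdots \supseteq V_2 \cap W$. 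Your proposal does not yet identify this pivotal subspace $W$ or the single submodularity step; without them the plan remains a description of the desired shape rather than a proof.

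\textbf{Part~(2).} Your plan here is essentially the paper's: exhibit enough realizable rank functions annihilated by $I_n$ to span a hyperplane. The paper's supply of such functions is not direct sums and their specializations but rather the polymatroids $L_S^d$ of lines in general position in a $d$-dimensional space; these turn out to be flexible enough (via M\"obius inversion) to express each standard basis vector $e_S$ modulo a controlled multiple of $e_{\{1,3,n\}}$.
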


The last statement is made more precise in Prop.~\ref{prop:nosub} using the constructions in the next section.
Note that by taking $n=4$ in (\ref{eq:theorem}), we recover Ingleton's inequality (\ref{eq:ingleton}), and for each $n>4$ we have a new necessary condition for a polymatroid to be realizable.

\section{Operations on polymatroids and inequalities}\label{sect:operations}
Since every polymatroid lies in the hyperplane $H_n=\{F(\emptyset)=0\}$, we work in this subspace.  When thinking of the functions in $H_n$ as vectors in $\R^{2^n-1}$, we write $e_A$ for the standard basis vector which has a 1 in the coordinate indexed by $\emptyset \neq A \subseteq \oneto{n}$, and 0 elsewhere (i.e., the function which takes value 1 on $A$, and 0 on other subsets of $\oneto{n}$).  We write $\{e_A^*\}$ for the dual basis to $\{e_A \}$, and
\[
\pair{\ , \ } \colon H_n^* \times H_n \to \R
\]
for the standard pairing between $H_n$ and its dual vector space $H_n^*$.

The inequality (\ref{eq:theorem}) can be identified with the linear functional
\[
\ineq{n} = \db{1,3}+ \db{1,n} -\db{1,2} - \db{1,3,n} + \sum_{i=3}^n \left( \db{2,i} + \db{i-1,i} - \db{i} - \db{2, i-1, i} \right)
\]
on $H_n$, in that the inequality holds for a subspace arrangement $\subv$ if and only if $\pair{I_n, \rk_{\subv}} \geq 0$.  Recall that for any convex cone $\mathcal{C}$ in a vector space $V$, the dual cone $C^{\vee}$ is defined by
\[
\mathcal{C}^{\vee} := \setst{f \in V^*}{ \pair{f,c} \geq 0 \text{ for all }c \in \mathcal{C}} .
\]
Then the first two statements of Theorem \ref{thm:main} can be interpreted as saying that $I_n$ is an extremal ray of $\realcone{n}^{\vee}$ for any $n$.

For positive integers $k$ and $n$, a map
\[
\varphi \colon \Pow(k) \to \Pow(n)
\]
such that $\varphi(\emptyset) = \emptyset$ induces linear maps
\begin{equation}\label{eq:sub}
\begin{split}
\varphi^{\pb} \colon &H_n \to H_k \qquad	\qquad \varphi_{\pf} \colon H_k^* \to H_n^* \\
		&P \mapsto P \circ \varphi			\qquad \qquad \qquad	 e_A^* \mapsto e_{\varphi(A)}^* , 
\end{split}
\end{equation}
We define the first by thinking of elements of $H_n$ as functions, and the second using our standard dual basis.  
It is straightforward to check that these maps are dual to one another, so that
\begin{equation}\label{eq:adjoint}
\pair{f, \varphi^{\pb} P} = \pair{\varphi_{\pf} f, P}
\end{equation}
holds for any $f \in H_k^*$ and $P \in H_n$.  

We assume that all such maps between power sets appearing in this paper preserve unions (i.e., are morphisms of join semi-lattices), unless explicitly stated otherwise.  Such a $\varphi$ is order-preserving and completely determined by the images of one element sets; we write
\[
\varphi(i) := \varphi(\{i\})
\]
to simplify the notation.
The map $\varphi_{\pf}$ can be thought of as a ``substitution'' map for inequalities: for example, take $\varphi \colon \Pow(2) \to \Pow(3)$ determined by $\varphi(1) = \{1\},\, \varphi(2) = \{2,3\}$, and $f = \eb{1}^* + \eb{2}^* - \eb{1,2}^*$.  Then $\varphi_{\pf} f = \eb{1}^* + \eb{2,3}^* - \eb{1,2,3}^*$.

\begin{lemma}\label{lem:restrict}
With $\varphi$ as above, we have that 
\begin{enumerate}[(a)]
\item $\varphi^{\pb}$ restricts to maps $\polymatcone{n} \xto{\varphi^\pb} \polymatcone{k}$ and $\realcone{n} \xto{\varphi^{\pb}} \realcone{k}$, which are surjective when $\varphi$ is injective and injective when $\varphi$ is surjective;
\item $\varphi_{\pf}$ restricts to maps $\polymatcone{k}^{\vee} \xto{\varphi_{\pf}} \polymatcone{n}^{\vee}$ and $\realcone{k}^{\vee} \xto{\varphi_{\pf}} \realcone{n}^{\vee}$, which are surjective when $\varphi$ is surjective and injective when $\varphi$ is injective.
\end{enumerate}
\end{lemma}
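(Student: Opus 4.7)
The plan is to establish part (a) directly from the definitions and then derive part (b) from (a) via the adjunction (\ref{eq:adjoint}).

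For the restriction statement in (a), I would verify (PM1)--(PM3) for $\varphi^\pb P = P \circ \varphi$: the first two are immediate from $\varphi(\emptyset) = \emptyset$ and the fact that a union-preserving map is order-preserving, while submodularity follows from $\varphi(A \cup B) = \varphi(A) \cup \varphi(B)$, the inclusion $\varphi(A \cap B) \subseteq \varphi(A) \cap \varphi(B)$ (monotonicity), together with (PM2) and (PM3) for $P$. For the realizable case, given $\subv = \{V_1, \dotsc, V_n\}$ with $\rk_\subv = P$, the $k$-arrangement defined by $V'_i := \sum_{j \in \varphi(i)} V_j$ realizes $\varphi^\pb P$, by union-preservation of $\varphi$. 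Injectivity of $\varphi^\pb$ when $\varphi$ is surjective is immediate: then $\varphi(\Pow(k)) = \Pow(n)$ and $P$ is determined by $P \circ \varphi$.

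The only step with combinatorial content is surjectivity of $\varphi^\pb$ when $\varphi$ is injective. The key observation is that injectivity of $\varphi$ forces $\varphi(j) \not\subseteq \bigcup_{i \neq j} \varphi(i)$ for every $j \in [k]$, since otherwise $\varphi([k]) = \varphi([k] \setminus \{j\})$. I would therefore pick $s_j \in \varphi(j) \setminus \bigcup_{i \neq j} \varphi(i)$ and define a retraction $\psi \colon \Pow(n) \to \Pow(k)$ by $\psi(B) = \{j : s_j \in B\}$. This $\psi$ preserves unions and $\emptyset$, and the choice of the $s_j$ forces $\psi \circ \varphi = \mathrm{id}_{\Pow(k)}$. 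Given $Q \in \polymatcone{k}$ (resp.\ $\realcone{k}$), the function $P := \psi^\pb Q$ then lies in $\polymatcone{n}$ (resp.\ $\realcone{n}$) by the restriction statement already proved, applied to $\psi$, and $\varphi^\pb P = Q$.

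Part (b) follows formally from (a). For the restriction $\varphi_\pf(\polymatcone{k}^\vee) \subseteq \polymatcone{n}^\vee$, if $f \in \polymatcone{k}^\vee$ and $P \in \polymatcone{n}$ then $\pair{\varphi_\pf f, P} = \pair{f, \varphi^\pb P} \geq 0$ by (a); the realizable version is identical. Injectivity of $\varphi_\pf$ when $\varphi$ is injective is immediate since $\db{A} \mapsto \db{\varphi(A)}$ sends distinct basis vectors to distinct basis vectors. For surjectivity when $\varphi$ is surjective, I would note that surjectivity of $\varphi$ as a set map forces each singleton $\{j\} \subseteq [n]$ to equal some $\varphi(i_j)$ (a singleton has no nontrivial union decomposition), so $\sigma(B) := \{i_j : j \in B\}$ is a union-preserving section with $\varphi \circ \sigma = \mathrm{id}_{\Pow(n)}$, giving $\varphi_\pf \circ \sigma_\pf = \mathrm{id}$. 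For any $g \in \polymatcone{n}^\vee$, $\sigma_\pf g$ lies in $\polymatcone{k}^\vee$ by the restriction part of (b) already proved, and maps to $g$ under $\varphi_\pf$.

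I expect the only real obstacle to be the brief combinatorial argument producing the distinguishing elements $s_j$ in (a); everything else is either a direct verification or a formal consequence of (a) via the adjunction.
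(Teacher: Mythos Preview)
Your proof is correct and follows the paper's approach: the key combinatorial step in (a)---picking, for each $j$, an element $s_j\in\varphi(j)$ not in any other $\varphi(i)$---is exactly the paper's argument (there the elements are called $a_i$), with your union-preserving retraction $\psi$ just a clean repackaging of the paper's explicit lift. For (b) the paper dispatches everything in one sentence by invoking duality; your explicit section $\sigma$ via singleton preimages makes precise what the paper leaves implicit, and is in fact a bit more careful, since surjectivity of $\varphi_\pf$ on the dual \emph{cones} does not follow from linear-algebraic duality alone.
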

\begin{proof}
Since $\varphi^{\pb}$ and $\varphi_{\pf}$ are dual to one another, we only need to prove the statements for $\varphi^{\pb}$.  The statements regarding the cones of polymatroids follow easily.  For example, that $\varphi^{\pb}X$ satisfies the submodularity condition (PM3) in the definition of a polymatroid is essentially equivalent to $\varphi$ preserving union.  For a realizable $\rk_{\subv} \in \realcone{n}$, where $\subv=\{V_1, \dotsc, V_n\}$, we have that $\varphi^{\pb} \rk_{\subv}$ is the rank function of the subspace arrangement
\[
\left\{ \sum_{i \in {\varphi}(1)} V_i , \dotsc , \sum_{i \in {\varphi}(k)} V_i \right\} .
\]

When $\varphi$ is injective, a realizable rank function $\rk_{\mathcal{W}} \in \realcone{k}$ (where $\mathcal{W} = \{W_1, \dotsc, W_k\}$) can always be lifted to some $\rk_{\subv} \in \realcone{n}$: for such a $\varphi$, each set $\varphi(i)$ contains at least one element $a_i$ which is not in any other $\varphi(j)$ (otherwise, we would have $\varphi(\oneto{k} \setminus \{i\})= \varphi(\oneto{k})$ and $\varphi$ would not be injective).  Fixing some choice of $\{a_i\}_{i=1}^k$, the subspace arrangement
\[
V_j =
\begin{cases}
W_i	& j=a_i	\\
0	& j \neq \text{any }a_i \\
\end{cases}
\]
satisfies $\varphi^{\pb} \rk_{\subv} = \rk_{\mathcal{W}}$, so $\varphi^{\pb}$ is surjective in this case.

The kernel of $\varphi^{\pb}$ is generated by $\setst{e_{A}}{A \notin \im \varphi}$, so $\varphi^{\pb}$ is injective when $\varphi$ is surjective, and so is its restriction to any subset of $H_n$.
\end{proof}


\section{Proof of Theorem}\label{sect:higher}

\subsection{Validity of the inequalities}
First, we show that inequality (\ref{eq:theorem}) holds for an arrangement of $n$ subspaces.

\begin{proof}[Proof of the inequality (\ref{eq:theorem})]
Retain the notation of the statement of the theorem, and for any subspace $Z$ constructed from the $V_i$, we also denote by $\gen{Z }$ its dimension.
For a pair of subspaces $Y \subseteq Z$, write $[Z:Y] = \gen{Z}-\gen{Y}$.  We let the operation $+$ have precedence over $\cap$ in order to minimize the number of parentheses necessary.  So we have, for example, $A + B \cap C = (A + B) \cap C$.

Define $W=V_3 \cap \cdots \cap V_n$.  We have
\begin{equation}\label{eq:mainsubmod}
[W + V_1 + V_2 : W + V_1] \leq [W + V_2 : W]
\end{equation}
by submodularity. Starting with the left hand side, we have $\gen{W + V_1 + V_2} \geq \gen{1,2}$ by containment of subspaces.  Then using that $W + V_1 \subseteq V_3 + V_1 \cap V_n + V_1$, we find that
\[
\gen{W + V_1} \leq \gen{1,3}  + \gen{1, n} - \gen{1, 3, n}
\]
and so we get a lower bound for the left hand side of (\ref{eq:mainsubmod}):
\begin{equation*}
\gen{1,2}  -\gen{1,3} - \gen{1, n} +\gen{1, 3, n} \leq [W + V_1 + V_2 : W + V_1] .
\end{equation*}

On the right hand side, we have that $[W + V_2 : W] = [V_2 : V_2 \cap W]$.
Now we consider the descending chain of subspaces
\[
V_2 \supseteq V_2 \cap V_3 \supseteq \cdots \supseteq V_2 \cap \cdots \cap V_n = V_2 \cap W,
\]
which gives the formula
\begin{equation}\label{eq:tower}
 [V_2 : V_2 \cap W] = \sum_{i=3}^{n} \ [V_2 \cap \cdots \cap V_{i-1} : V_2 \cap \cdots \cap V_{i}].
\end{equation}
We give an upper bound on each summand of (\ref{eq:tower}):  for  $3 \leq i \leq n$, we have
\[
[V_2 \cap \cdots \cap V_{i-1} : V_2 \cap \cdots \cap V_{i}] = [V_{i} + ( V_2 \cap \cdots \cap  V_{i-1}) : V_{i}],
\]
and then using the containment $V_{i} + ( V_2 \cap \cdots \cap  V_{i-1}) \subseteq V_{i} + V_2 \cap V_{i} + V_{i-1}$ we find that
\[
 [V_{i} + ( V_2 \cap \cdots \cap  V_{i-1}) : V_{i}] \leq [V_{i} + V_2 \cap V_{i} + V_{i-1}: V_{i}] = \gen{2,i} + \gen{i-1, i} - \gen{2, i-1, i} - \gen{i}.
\]
Plugging this expression into (\ref{eq:tower}) and then into (\ref{eq:mainsubmod}) gives the main inequality (\ref{eq:theorem}) after rearranging.
\end{proof}

By varying $n$, the inequalities we obtain form a \textbf{hierarchy} in the following sense.  
If $\varphi \colon \Pow(n) \to \Pow(n-1)$ is given by
\[
{\varphi}(i) =
\begin{cases}
\{i\}	& i \neq n \\
\{1,n-1\}	& i = n ,\\
\end{cases}
\]
then it can be immediately verified that $\varphi_{\pf}$ takes the the inequality (\ref{eq:theorem}) for $n$ subspaces to the one for $n-1$ subspaces (i.e., $\varphi_{\pf}\ineq{n} = \ineq{n-1}$). 
\subsection{Independence of the inequalities}
Now we want to show that these inequalities are genuinely ``new'' in some appropriate sense.

\begin{prop}\label{prop:nosub}
The inequality (\ref{eq:theorem}) does not follow from a linear substitution into any inequality valid on a smaller number of subspaces.  More precisely, we have $\ineq{n} \notin \varphi_{\pf}(\realcone{k}^{\vee})$ for any $\varphi \colon \Pow(k) \to \Pow(n)$ with $k < n$.
\end{prop}
\begin{proof}
Suppose to the contrary that $\ineq{n} = \varphi_{\pf}f$ for some $f \in \realcone{k}^{\vee}$.  We will demonstrate a (non-realizable) polymatroid $T$ such that $\pair{\ineq{n}, T} = -1$ but $\varphi^{\pb}T$ is realizable (over any field).  Then using (\ref{eq:adjoint}) this would give $\pair{\ineq{n}, T} = \pair{\varphi_{\pf}f, T} = \pair{f, \varphi^{\pb}T} \geq 0$, a contradiction.

Consider the polymatroid $T \in \polymatcone{n}$ given by
\[
T(A) =
\begin{cases}
2	& A=\{2\};	\\
n-2	& A=\{i\}\ \text{with}\ i \neq 2;\\
n-1	& A=	\{2,i\}\ \text{or}\ \{i-1,i\}\ \text{with}\ i \geq 3,\ \text{or}\ \{1,3\},\ \text{or}\ \{1,n\};	\\
n	&\text{otherwise} .
\end{cases} 
\]
Then $T$ is not realizable because $\pair{\ineq{n}, T} = -1$, but it is ``almost realizable'' in the sense that $\varphi^{\pb}T$ is realizable for any potential $\varphi$ that might give $\ineq{n} = \varphi_{\pf} f$.  To see this, we make some preliminary reductions.  Firstly, it is enough to consider the case $k=n-1$. This is because any $\varphi \colon \Pow(k) \to \Pow(n)$ factors (non-uniquely) as
\[
\varphi \colon \Pow(k) \xto{\varphi_1} \Pow(n-1) \xto{\varphi_2} \Pow(n) ,
\]
and so we have that
\[
\varphi^{\pb} T = T \circ \varphi = T \circ \varphi_2 \circ \varphi_1 = \varphi^{\pb}_1 \varphi^{\pb}_2 T .
\]
Thus, if $\varphi^{\pb}_2 T$ is realizable, then so is $\varphi^{\pb} T$ by Lemma \ref{lem:restrict}.

Now since each $\db{i}$ with $i \geq 3$ appears with nonzero coefficient in $\ineq{n}$, it must be that $\{i\} \in \im \varphi$ for $i \geq 3$.  So after possibly renumbering, we can assume that $\varphi (i) = \{i + 1\}$ for $i \geq 2$, and we need only consider the possible cases for $\varphi(1)$. 
Let $\mathcal{W} = \{W_i\}_{i=1}^{n-1}$ be a subspace arrangement in a vector space $W$, with basis $\{w_1, \dotsc, w_{n-1}, \tilde{w}\}$, defined as follows.  We take
\[
W_{i} = \gen{w_i, w_{i+1}, \dotsc, w_{i+n-4}, \tilde{w}} 
\]
for $i \geq 2$ (where the indices of the basis vectors are taken mod $n-1$), and $W_1$ will be chosen based on $\varphi(1)$ to make $\varphi^{\pb} T$ realizable.

In the case that $\varphi(1) = \emptyset$, it is straightforward to check that $\varphi^{\pb} T$ is realized by $\mathcal{W}$ when we take $W_1 = 0$.  If $\varphi^{\pb} T(1) = T \circ \varphi(1) = n$ (e.g., if $|\varphi(1)| \geq 3$), then  $\varphi^{\pb} T$ is realized by taking $W_1 = W$ instead.  Similarly, if $\varphi(1) \subseteq \{3, 4, \dotsc, n\}$, then we can take
\[
W_1 = \sum_{j \in \varphi(1)} W_j
\]
to realize $\varphi^{\pb}T$.

This leaves only the cases where $\varphi(1)$ is one of the sets $\{1\}, \{2\}, \{1,3\}, \{1, n\},$ or $\{2,i\}$ with $i \geq 3$, so we assume to be in this situation now. Consider the subspaces
\[
Z_1 = \gen{w_1, w_2, \dotsc, w_{n-3}, \tilde{w}}, \qquad Z_2 = \gen{w_1 + w_2 + \cdots + w_{n-1}, \tilde{w}}, 
\]
and the following table which associates a choice of $W_1$ to each remaining case for $\varphi(1)$.
\begin{center}
\begin{tabular}{|c|c|}
\hline
$\varphi(1)$ &	$W_1$	\\
\hline
$\{1\}	$&	$Z_1$\\
$\{2\}	$&	$Z_2$\\
$\{1,3\}$	& $Z_1 + W_2$	\\
$\{1,n\}$	& $Z_1 + W_{n-1}$	\\
$\{2,i\}$	& $Z_2 + W_{i-1}$	\\
\hline
\end{tabular}
\end{center}
Then using these choices of $W_1$, it can be checked that $\mathcal{W}$ realizes $\varphi^{\pb}T$ in each case, and this completes the proof.
\end{proof}

\subsection{Irreducibility of the inequalities}

If the term $\gen{1,2}$ is replaced by $\gen{1}$ in our new inequality (\ref{eq:theorem}), then the resulting inequality follows simply by adding together a collection of basic inequalities.  This might lead one to wonder how ``strong'' the new inequalities are.  We show in this subsection that (\ref{eq:theorem}) is \textbf{irreducible}, meaning that it cannot be written as a positive sum of any two nontrivial inequalities which hold for all subspace arrangements.

In the language of convex geometry, we show that $I_n$ defines a facet of $\realcone{n}$ (i.e., a face of codimension 1), which implies that $I_n$ is an extremal ray of $\realcone{n}^{\vee}$.
Fix $n>4$ and denote by $Z \subset H:=H_n$ the kernel of $I:=\ineq{n}$.  To show that $\realcone{} \cap Z$ is a facet of $\realcone{} :=\realcone{n}$, we need to show that $\dim (\realcone{} \cap Z) = \dim \realcone{} - 1 = 2^n - 2$.
(The fact that $\dim \realcone{} = 2^n-1$ is easy to see by considering arrangements of $n$ subspaces in a one dimensional ambient space; it also follows from the proposition below.)

For $S \subseteq [n]$ and $d\geq 1$, define $\genlines{S}{d} \in H$ as the polymatroid whose value on $A \subseteq [n]$ is
\[
\genlines{S}{d}(A) := \min \{d, |A \cap S| \} .
\]
This is the rank function of an arrangement of lines in general position in a $d$-dimensional vector space, where $\dim V_i = 1$ for $i \in S$ and $V_i =0$ otherwise, so $\genlines{S}{d} \in \realcone{}$.  It will be useful in the future to note that
\begin{equation}\label{eq:linessplit}
\genlines{S}{d} = \sum_{i \in S}\genlines{\{i\}}{1}
\end{equation}
for any $S \subseteq [n]$ such that $d \geq |S|$.  The following lemma gives some cases when $\genlines{S}{d} \in Z \cap \realcone{}$.

\begin{lemma}\label{lem:pnvanish}
The functional $I$ vanishes on $\genlines{S}{d}$ whenever $d \geq 3$.  If $d=2$, then it vanishes if
\[
\begin{split}
\text{either }&2 \notin S \text{ and }\{1,3,n\} \nsubseteq S, \text{ or }\\
2 \in S \text{ and } S &\text{ contains no pair } \{i,i+1\} \text{ with $3 \leq i \leq n-1$} .
\end{split}
\]
If $d=1$, it vanishes when
\[
\begin{split}
\text{either }&2 \notin S \text{ and }S \cap \{1,3,n\} \neq \{3,n\}, \text{ or }\\
S=\{1,2,\dotsc,n \} &\text{ or } \{2,\dotsc,k \} \text{ or } \{2\} \cup \{k,\dotsc,n\} 
\end{split}
\]
for some $2 \leq k \leq n$.
\end{lemma}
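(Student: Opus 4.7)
My plan is to compute $\pair{I_n, \genlines{S}{d}}$ directly, using the key structural observation that every set $A$ with nonzero coefficient in $I_n$ has $|A| \leq 3$. This lets me handle the three regimes $d \geq 3$, $d = 2$, $d = 1$ successively via the pointwise identity $\genlines{S}{d}(A) - \genlines{S}{d-1}(A) = 1$ when $|A \cap S| \geq d$ (and $0$ otherwise). For $d \geq 3$, since $|A| \leq 3 \leq d$ on the support of $I_n$ we have $\genlines{S}{d}(A) = |A \cap S| = \sum_{j \in S} \genlines{\{j\}}{1}(A)$, so it suffices to show $\pair{I_n, \genlines{\{j\}}{1}} = 0$ for each $j \in [n]$. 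This pairing equals the sum of the coefficients of $I_n$ over sets $A$ with $j \in A$, and I plan to verify in the cases $j \in \{1,2,3\}$, $4 \leq j \leq n-1$, and $j = n$ that the $\pm 1$ contributions telescope to zero in each case.

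For $d = 2$: combining the previous step with $\genlines{S}{2} = \genlines{S}{3} - \varepsilon$, where $\varepsilon(A) = 1$ if $|A \cap S| \geq 3$ and $0$ otherwise, only the three-element sets in the support of $I_n$ contained in $S$ can contribute. These are $\{1,3,n\}$ and $\{2,i-1,i\}$ for $4 \leq i \leq n$, each with coefficient $-1$, yielding
\[
\pair{I_n, \genlines{S}{2}} = \mathbf{1}[\{1,3,n\} \subseteq S] + \sum_{i=4}^n \mathbf{1}[\{2,i-1,i\} \subseteq S],
\]
from which the stated sufficient conditions for vanishing follow at once.

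For $d = 1$: I expand $\pair{I_n, \genlines{S}{1}} = \sum_A c_A\, \mathbf{1}[A \cap S \neq \emptyset]$, where $c_A$ is the coefficient of $e_A^*$ in $I_n$, and split the sum according to whether $2 \in A$. When $2 \in S$, the sets containing $2$ all meet $S$, and their contributions telescope to $0$ by the same bookkeeping as in the $d \geq 3$ case. Writing $\alpha_j := \mathbf{1}[j \in S]$ and using $\alpha \vee \beta = \alpha + \beta - \alpha\beta$, the remainder simplifies: in the subcase $2 \notin S$, the pairing collapses to $\alpha_3 \alpha_n (1 - \alpha_1)$, vanishing exactly when $S \cap \{1,3,n\} \neq \{3,n\}$; in the subcase $2 \in S$, it reduces to a combination of $\alpha_1, \alpha_3, \alpha_n$, the product $\alpha_3 \alpha_n (1 - \alpha_1)$, and the telescoping sum $\sum_{j=3}^{n-1} \alpha_j (1 - \alpha_{j+1})$, which counts the ``right endpoints'' of runs of $1$'s in the sequence $\alpha_3, \ldots, \alpha_n$. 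Enforcing vanishing then forces this sequence to be empty or a single block of $1$'s anchored at $3$ or $n$, producing precisely the three families $S = [n]$, $\{2, \ldots, k\}$, and $\{2\} \cup \{k, \ldots, n\}$ in the statement.

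The main bookkeeping obstacle is the $d = 1$, $2 \in S$ case: the cancellations depend on the interval structure of $S \cap \{3, \ldots, n\}$, and one must track carefully how the coefficients from the $\sum_{i=4}^n$ portion of $I_n$ interact with transitions between $S$ and $S^c$ along the chain $3 < 4 < \cdots < n$. The $d \geq 3$ and $d = 2$ cases reduce to essentially routine tabulations once the reduction via the $\varepsilon$-increments is set up.
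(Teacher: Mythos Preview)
Your approach is essentially the same as the paper's. Both arguments rest on the observation that every set $A$ in the support of $I_n$ has $|A|\le 3$, so that $e_A^*(\genlines{S}{d})=|A\cap S|$ whenever $d\ge 3$; the vanishing for $d\ge 3$ then reduces to checking that each index $j$ appears with signed multiplicity zero in $I_n$, which is exactly your computation of $\pair{I_n,\genlines{\{j\}}{1}}=0$. For $d\le 2$ the paper only says ``simple \emph{ad hoc} methods''; your incremental device $\genlines{S}{d-1}=\genlines{S}{d}-\varepsilon$ with $\varepsilon(A)=\mathbf{1}[|A\cap S|\ge d]$ is a clean way to make those methods explicit, and in fact yields the exact value of $\pair{I_n,\genlines{S}{d}}$ rather than merely the vanishing.

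One small point to flag: your $d=2$ formula
\[
\pair{I_n,\genlines{S}{2}}=\mathbf{1}[\{1,3,n\}\subseteq S]+\sum_{i=4}^{n}\mathbf{1}[\{2,i-1,i\}\subseteq S]
\]
is correct, but it does \emph{not} quite yield the second stated sufficient condition as written. If $2\in S$ and $S$ contains no consecutive pair $\{i,i+1\}$ with $3\le i\le n-1$, the sum vanishes, but the term $\mathbf{1}[\{1,3,n\}\subseteq S]$ can still be $1$ (e.g.\ $S=\{1,2,3,n\}$ for $n\ge 5$). So ``follow at once'' is too quick here; the lemma's second $d=2$ clause tacitly needs $\{1,3,n\}\nsubseteq S$ as well. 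This does not affect any application in the paper, since $\genlines{S}{2}$ is only ever invoked there with $|S|=3$, in which case $2\in S$ forces $\{1,3,n\}\nsubseteq S$ automatically. Your $d=1$ analysis via the ``right endpoints of runs'' bookkeeping is sound and is precisely the sort of ad~hoc accounting the paper alludes to.
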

\begin{proof}
The key is that $\genlines{S}{d}$ is the rank function for a set of lines $\{V_i\}$ in general position, and that dimensions add for direct sums of subspaces.  So when $d \geq |A|$, we get
\[
e_A^* (\genlines{S}{d}) = \dim \left(\sum_{a \in A} V_a \right) = \sum_{a \in A} \dim V_a 
= \sum_{a \in A \cap S} 1 = |A \cap S|
\]
This gives the first statement by counting the appearances of each index on both sides of (\ref{eq:theorem}), since all terms $e_A^*$ appearing in $\ineq{}$ have $|A| \leq 3$.  The other two statements can be proven similarly, using some simple \emph{ad hoc} methods to account for the terms $e_A^*$ with $|A| > d$.
\end{proof}

\begin{lemma}\label{lem:idem}
The following hold in $H$:
\begin{align}\label{eq:genlineidem}
e_{[n]} 	&= \genlines{[n]}{n} - \genlines{[n]}{n-1} \\
e_S		&= \genlines{[n]}{n-1} - \genlines{S}{n-2} - \genlines{\{i\}}{1} \qquad \text{when }S= [n] \setminus \{i\} \\
e_S 		&= \sum_{A \supseteq S} (-1)^{|A \setminus S|+1} \genlines{A}{|A|-1} \qquad \text{for }|S| \leq n-2 .
\end{align}
\end{lemma}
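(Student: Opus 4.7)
My plan is to derive all three identities uniformly from a single Möbius inversion on the Boolean lattice $\Pow(n)$. The starting observation is that, for any nonempty $A \subseteq [n]$ and any nonempty $B \subseteq [n]$,
\[
\genlines{A}{|A|-1}(B) = \min\{|A|-1, |A \cap B|\} = |A \cap B| - [A \subseteq B],
\]
since $|A \cap B| \leq |A|$ with equality iff $A \subseteq B$. Introducing $u_i \in H$ defined by $u_i(B) = [i \in B]$ and $\chi_A \in H$ defined by $\chi_A(B) = [A \subseteq B]$, this rearranges to the key identity
\[
\chi_A = \sum_{i \in A} u_i - \genlines{A}{|A|-1}.
\]

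Since $\chi_T = \sum_{B \supseteq T} e_B$, standard Möbius inversion on the Boolean lattice gives $e_S = \sum_{T \supseteq S}(-1)^{|T \setminus S|}\chi_T$ for every nonempty $S$. Substituting the previous display and interchanging summations produces
\[
e_S = \sum_{T \supseteq S}(-1)^{|T \setminus S|+1}\genlines{T}{|T|-1} + \sum_{i=1}^n c_i(S)\, u_i,
\]
where $c_i(S) := \sum_{T \supseteq S \cup \{i\}}(-1)^{|T \setminus S|}$. A short calculation, splitting on whether $i \in S$ or $i \notin S$ and using $\sum_{V \subseteq X}(-1)^{|V|} = [X = \emptyset]$, yields $c_i(S) = [S=[n]]$ in the first case and $c_i(S) = -[S = [n] \setminus \{i\}]$ in the second.

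Consequently the correction term $\sum_i c_i(S) u_i$ vanishes precisely when $|S| \leq n-2$, which immediately recovers identity (c). For $S = [n]$ the correction equals $\sum_{i=1}^n u_i = \genlines{[n]}{n}$, and combining with the lone remaining term of the first sum gives identity (a). For $S = [n] \setminus \{i\}$ the correction equals $-u_i = -\genlines{\{i\}}{1}$, and combining with the two remaining terms of the first sum gives identity (b). The one real piece of labor is this final bookkeeping of the correction, which also explains why the three regimes $|S| \in \{n,\ n-1,\ \leq n-2\}$ naturally demand the three separate formulas stated in the lemma.
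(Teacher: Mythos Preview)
Your proof is correct and follows essentially the same route as the paper's: both arguments rest on the identity $\chi_A = \genlines{A}{|A|} - \genlines{A}{|A|-1}$ (your version simply unpacks $\genlines{A}{|A|}$ as $\sum_{i\in A} u_i$ from the start), apply M\"obius inversion on the Boolean lattice, and then compute the same alternating sums to see when the ``linear'' part $\sum_i c_i(S)\,u_i$ survives. The only differences are notational---you name the auxiliary functions $u_i$ and $\chi_A$ explicitly and organize the case split around the coefficients $c_i(S)$, whereas the paper keeps everything in terms of $\genlines{A}{|A|}$ and $\genlines{\{j\}}{1}$---but the underlying computation is identical.
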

\begin{proof}
For any $S \subseteq \oneto{n}$, we can directly compute from the definition that
\[
(\genlines{S}{|S|} - \genlines{S}{|S|-1})(A) = \min \{|S|, |A \cap S| \} - \min \{|S| -1, |A \cap S| \} =
\begin{cases}
1 & A \supseteq S \\
0 & \text{otherwise} ,
\end{cases}
\]
and so $\genlines{S}{|S|} - \genlines{S}{|S|-1} = \sum_{A \supseteq S} e_A$.  By applying M\"obius inversion \cite[\S~3.7]{stanleyenumcombin}, we can express each $e_S$ as
\[
e_S = \sum_{A \supseteq S} (-1)^{|A\setminus S|} (\genlines{A}{|A|} - \genlines{A}{|A|-1}) =  \sum_{A \supseteq S} (-1)^{|A\setminus S|} \genlines{A}{|A|} + \sum_{A \supseteq S} (-1)^{|A\setminus S|+1} \genlines{A}{|A|-1},
\]
since the M\"obius function of $\Pow(n)$ is $\mu(S,A) = (-1)^{|A \setminus S|}$ for $S\subseteq A$.
The first formula is immediately verified since the sums only consist of one term in this case.  When $S = [n]\setminus \{i\}$, we use that $\genlines{S}{|S|} - \genlines{[n]}{n} = -\genlines{\{i\}}{1}$ from (\ref{eq:linessplit}).
Finally, if $|S| \leq n-2$, then we can write the first sum as
\[
\sum_{A \supseteq S} \sum_{j \in A} (-1)^{|A \setminus S|} \genlines{\{j\}}{1} 
\]
using (\ref{eq:linessplit}) again. The coefficient of $\genlines{\{j\}}{1}$ in this sum is
\[
\sum_{A \supseteq S \cup \{j\}} (-1)^{|A \setminus S|} = 
(-1)^{n-|S|} \sum_{A \supseteq S \cup \{j\}} \mu(A,[n]), 
\]
which is 0 whenever $S \cup \{j\} \neq [n]$, by a basic property of the M\"obius function.
\end{proof}

\begin{lemma}\label{lem:repformulas}
The following hold in the vector space $H$:
\begin{align}
\genlines{\{i,j,k\}}{3} - \genlines{\{i,j,k\}}{2} &= \sum_{A \supseteq \{i,j,k\}} e_A \\
\genlines{T \cup \{a\}}{1} + \genlines{T \cup \{b\}}{1} - \genlines{T}{1} - \genlines{T \cup \{a,b\}}{1} &= \sum_{\substack{a,b \in A \\ A \cap T = \emptyset}} e_A .
\end{align}
\end{lemma}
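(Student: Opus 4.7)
The plan is to verify both identities pointwise: since $\{e_A\}_{A \neq \emptyset}$ forms a basis for $H$, it suffices to check that the coefficients of $e_A$ match on both sides for every nonempty $A \subseteq [n]$.

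For the first identity, I would reuse the computation already done inside the proof of Lemma \ref{lem:idem}, which gives directly from the definition of $\genlines{S}{d}$ that
\[
(\genlines{S}{|S|} - \genlines{S}{|S|-1})(A) = \min\{|S|, |A \cap S|\} - \min\{|S|-1, |A \cap S|\}
\]
equals $1$ when $A \supseteq S$ and $0$ otherwise. Specializing to $S = \{i,j,k\}$ is exactly the stated formula.

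For the second identity, I would abbreviate $f_S := \genlines{S}{1}$, so that $f_S(A) = 1$ if $A \cap S \neq \emptyset$ and $0$ otherwise, and then split into cases on whether $A \cap T$ is empty. When $A \cap T \neq \emptyset$, all four values $f_T(A)$, $f_{T \cup \{a\}}(A)$, $f_{T \cup \{b\}}(A)$, $f_{T \cup \{a,b\}}(A)$ equal $1$, so the alternating sum vanishes. When $A \cap T = \emptyset$, we have $f_T(A) = 0$, while a short inclusion--exclusion shows that $f_{T \cup \{a\}}(A) + f_{T \cup \{b\}}(A) - f_{T \cup \{a,b\}}(A)$ equals $1$ precisely when both $a \in A$ and $b \in A$. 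Hence the coefficient of $e_A$ on the left-hand side is $1$ exactly when $\{a,b\} \subseteq A$ and $A \cap T = \emptyset$, matching the right-hand side.

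The main obstacle here is pure bookkeeping rather than any conceptual difficulty. As a sanity check I would observe that the combination on the left of the second identity is precisely the submodularity defect of $S \mapsto \genlines{S}{1}$ at the pair $(T \cup \{a\},\, T \cup \{b\})$, whose union is $T \cup \{a,b\}$ and whose intersection is $T$; so the identity measures the failure of $\genlines{\cdot}{1}$ to be modular, concentrated exactly on those $A$ that hit both singletons $\{a\}$ and $\{b\}$ while missing $T$.
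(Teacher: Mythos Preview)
Your proof is correct and follows essentially the same approach as the paper: the paper cites the first line of the proof of Lemma~\ref{lem:idem} for the first identity (exactly as you do) and for the second simply says it ``follows easily from the definitions,'' which is precisely the pointwise case analysis you carry out.
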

\begin{proof}
The first equation is a special case of the first line of the proof of Lemma \ref{lem:idem}.
The second follows easily from the definitions.
\end{proof}

Let $F$ be the linear span of $\realcone{} \cap Z$.  To compute the dimension of $\realcone{} \cap Z$, we give an explicit basis of $F$.

\begin{prop}\label{prop:irred}
An explicit basis of $F$ is given by
\[
\setst{e_S + \alpha_S\, e_{\{1,3,n\}}}{S \neq \{1,3,n\}} ,
\]
where for $3\leq i \leq n$ and $3 \leq j \leq n-1$ we define
\[
\alpha_S =
\begin{cases}
-1	& S=\{i\}\text{ or }\{1,2\} \text{ or } \{2,j,j+1\}	\\
1	& S=\{1,3\}\text{ or }\{1,n\}\text{ or }S=\{2,i\}\text{ or } \{j, j+1\}	\\
0	& \text{otherwise} .
\end{cases} 
\]
Consequently, $\dim F=2^n-2$, and so $I$ defines a facet of $\realcone{}$ and the new inequalities (\ref{eq:theorem}) are irreducible.
\end{prop}
\begin{proof}
It is clear that the $2^n-2$ listed elements are linearly independent, hence we just need to show that each is in $F$.  We proceed by considering various cases for $S$.  Lemma \ref{lem:pnvanish} justifies the fact that all of the various $\genlines{S}{d}$ used in this proof are in $\realcone{} \cap Z$; we will not explicitly reference this fact at each occurence.  We also write $r:=\eb{1,3,n}$ to abbreviate the notation for the ``remainder'' term.
\begin{enumerate}[(1)]
\item If $|S| \geq 4$, then each term appearing in the expressions for $e_S$ in Lemma \ref{lem:idem} is in $\realcone{} \cap Z$, so $e_S \in F$.  In light of this, we ignore these ``higher'' terms in all sums below, without specific appeal to this item.

\item  Suppose $|S|=3$.  Then from Lemma \ref{lem:repformulas} we have
\[
\genlines{S}{3} - \genlines{S}{2} = e_S +(\text{higher terms}) .
\]
The first summand on the left hand side is always in $F$, and the second is in $F$ unless $S = \{1,3,n\}$ or $S=\{2,l-1,l\}$ for some $4 \leq l \leq n$, so $e_S \in F$ except possibly in these two cases.

\item Now suppose $S=\{i,j\}$, so that taking $T=\emptyset$ in Lemma \ref{lem:repformulas} we get
\[
\genlines{\{i\}}{1} + \genlines{\{j\}}{1} - \genlines{\{i,j\}}{1} = \eb{i,j} + \sum_{k \neq i,j} \eb{i,j,k} + (\text{higher terms}) .
\]
If $S \neq \{3,n\}$, then each term on the left hand side is in $F$, and so using (2) we get that $e_S \in F$ except possibly when $S \subseteq \{1,3,n\}$ or $S \subseteq \{2,l-1,l\}$ for some $4 \leq l \leq n$.

\item To deal with $S=\{3,n\}$, apply Lemma \ref{lem:repformulas} with $T=\{2,3,n\}^c,\, a=3,\, b=n$ to get
\[
\eb{3,n} + \eb{2,3,n} \in F .
\]
(here $A^c$ denotes the complement of $A$ in $\oneto{n}$).
Then (2), along with the assumption that $n > 4$, leaves that $\eb{3,n} \in F$.

\item A slight modification of the second part of Lemma \ref{lem:repformulas} (taking $T=\emptyset$ and replacing $\{a\}$ with $\emptyset$) shows that $\eb{1} = \genlines{\oneto{n}}{1} - \genlines{\{1\}^c}{1} \in F_n$, and similarly $\eb{2} \in F$.
This takes care of the cases that $\alpha_S =0$.

\item  The argument from (3), applied when $S=\{1,3\}$ and $S=\{1,n\}$, gives
\[
\eb{1,n} + r \in F \qquad \text{and} \qquad \eb{1,3} + r \in F ,
\]
as desired.

\item For $S=\{1,2\}$, we simply note that $\genlines{\{1\}}{1} = \sum_{A \supseteq \{1\}} e_A$, so using (2), (3), and (5) we get that $\eb{1,2} + \eb{1,3} + \eb{1,n} + r \in F$.  But then using (6) we can subtract
\[
\eb{1,n} + r + \eb{1,3} + r \in F
\]
to get that $\eb{1,2} - r \in F$.

\item Since $\genlines{\{1\}^c}{1} - \genlines{\{1,n\}^c}{1} = \eb{n} + \eb{1,n} \in F$, we get from (6) that $\eb{n} - r \in F$.

\item Similarly, we have $\genlines{\{2\}^c}{1} - \genlines{\{2,k\}^c}{1} = \eb{k} + \eb{2,k} \in F$ for $k \geq 3$.  In particular, the previous item gives that $\eb{2,n} + r \in F$.

\item Now take $S=\{2,k-1,k\}$ for some $4 \leq k \leq n$, and define $T:=\{1,\dotsc,k\}^{c}$.  Then we have
\[
\genlines{T \cup \{2\}}{1} + \genlines{T \cup \{k\}}{1} - \genlines{T}{1} - \genlines{T \cup \{2,k\}}{1} = 
\eb{2,k} + \sum_{l \notin \{2\} \cup \{k, \dotsc, n\}} \eb{2,k,l} + (\text{higher terms})
\]
from Lemma \ref{lem:repformulas}, and so by part (2) we get $\eb{2,k} + \eb{2,k-1,k} \in F$.  In particular, combining this with the previous item we find that $\eb{2,n-1,n} - r \in F$.

\item The reasoning from (3) shows that $\eb{k-1,k} + \eb{2,k-1,k}$ for any $4 \leq k \leq n$, then we use the previous item to get that $\eb{n-1,n}+ r \in F$.

\item We use (2) and (3) along with the expression
\[
\genlines{\{2,\dotsc,k\}}{1} -\genlines{\{2,\dotsc,k-1\}}{1} = \sum_{\{k\} \subseteq A \subseteq \{2, \dotsc,k-1\}^c} e_A
\]
to show that $\eb{k} + \eb{k,k+1} \in F$ for $3 < k < n$.  For $k=3$, we first get that $\eb{3} + \eb{1,3} + \eb{3,4} + \eb{3,n} + r \in F$ using (2) and (3), and then that $\eb{3} + \eb{3,4} \in F$ using (4) and (6).

\item Now using the terms from (12), (9), (10), and (11), in that order, we get the expression
\[
\begin{split}
\eb{k-1,k} + r = &(\eb{k,k+1} + r) - (\eb{k,k+1} + \eb{k}) + (\eb{k} + \eb{2,k})\\
 &- (\eb{2,k} + \eb{2,k-1,k}) + (\eb{2,k-1,k} + \eb{k-1,k})
\end{split}
\]
which shows that $\eb{k-1,k} +r \in F$ if and only if $\eb{k, k+1} + r \in F$, for any $4 \leq k < n$.  But in (11) we had that $\eb{n-1,n}+ r \in F$, so it must be that $\eb{k-1,k} + r \in F$ for all $4 \leq k \leq n$.
\end{enumerate}

This completes the proof of the proposition.
\end{proof}

All parts of Theorem \ref{thm:main} have now been proven.
\section{Future directions}\label{sect:future}

Of course, the most natural question to ask next is whether these new inequalities are enough to define $\realcone{n}$ for all $n$.  More precisely, we pose the following question.

\begin{question}
Are there any further independent inequalities on subspace arrangements besides those in this new hierarchy?
In other words, are there any more extremal rays of $\realcone{n}^{\vee}$ in addition to those obtained from:
\begin{enumerate}[a)]
\item substitutions into inequalities on fewer subspaces, that is, $\varphi_{\pf} f$ for some $\varphi$ and some $f \in \realcone{k}^{\vee}$ with $k < n$; and
\item all functionals obtained from $\ineq{n}$ by permuting the indices $\oneto{n}$?
\end{enumerate}
\end{question}

To try to answer this question, one could take the cone in $H_n^*$ defined by the functionals of a) and b), compute the extremal rays of the dual cone, and determine if these are realizable. This is what was done in \cite{MR1785025} for $\realcone{4}$.  However, this is beyond the author's computational power even for the smallest unknown case, $n=5$.  
An anonymous referee has suggested trying to reduce the problem by considering only connected polymatroids, which are those $(X, \rk)$ such that $\rk(X) - \rk(X \setminus S) = \rk (S)$ only when $S=X$ or $S = \emptyset$.  The cone generated by these should have fewer extremal rays, and inequalities on disconnected polymatroids should be understandable in terms of tensor products of inequalities on their connected components.

If the answer to the above question is ``no,'' then it would be nice to know if $\realcone{n}$ is even closed and/or polyhedral.
There are also questions related to the integral points of $\realcone{n}$, that is, those with integer coordinates.  Broadly, we can ask:
\begin{question}
Is every integral point of the cone $\realcone{n}$ (i.e., those with integer coordinates) a realizable polymatroid?
\end{question}
It is this question that makes the terminology ``realizable cone'' potentially misleading.
It can roughly be broken up into two questions.  First, we would like to know whether the sum of two polymatroids realizable over different fields is realizable (note here that this is the sum in the vector space $H_n$, not to be confused with the direct sum of matroids).  Second, there is a saturation problem:  if $P$ is a polymatroid such that there exists some $r \in \Q$ for which $rP$ is realizable, is $P$ itself realizable?

 
\subsection*{Acknowledgments}
The author would like to thank Harm Derksen for suggesting the use of ``almost realizable'' polymatroids for finding new inequalities, specifically the one used here.  He would also like to thank Alan Stapledon for answering many questions about convex geometry, and Andreas Blass for directing him to the paper \cite{MR1785025} (which saved the author from including proof of an already known result).  Comments from two anonymous referees were also helpful in improving the quality of the manuscript.

\bibliographystyle{alpha}
\bibliography{ryanbiblio}

\end{document}